\documentclass[12pt, oneside, reqno]{amsart}
\usepackage{amsmath}
\usepackage{amssymb}
\usepackage{centernot}
\usepackage{color}
\usepackage[dvipsnames,x11names,svgnames]{xcolor}
\usepackage[colorlinks=true,linkcolor=NavyBlue,citecolor=DarkGreen, urlcolor=blue]{hyperref}
\usepackage{amsthm}
\usepackage{amscd}
\usepackage{geometry}
\usepackage{mathrsfs}
\usepackage{dsfont}
\usepackage{cite}
\usepackage{mathtools}

\usepackage{tikz}
\usetikzlibrary{positioning, shapes.geometric, fit, backgrounds}

\newcommand{\mc}{\mathcal}

\newcommand{\ms}{\mathscr}

\newcommand{\cG}{\mc{G}}
\newcommand{\cH}{\mc{H}}
\newcommand{\cM}{\mc{M}}

\newcommand{\cR}{\mc{R}}

\newcommand{\sF}{\ms{F}}
\newcommand{\sI}{\ms{I}}
\newcommand{\sJ}{\ms{J}}
\newcommand{\sR}{\ms{R}}
\newcommand{\sV}{\ms{V}}

\newcommand{\1}{\mathds{1}}

\newcommand{\N}{\mathbb{N}}
\newcommand{\C}{\mathbb{C}}

\newcommand{\F}{\mathbb{F}}
\newcommand{\Fq}{\mathbb{F}_q}

\newcommand{\symdiff}{\mathbin{\triangle}}

\newcommand{\rbinom}[3]{\genfrac{[}{]}{0pt}{}{#1}{#2,#3}}

\DeclareMathOperator{\supp}{supp}
\DeclareMathOperator{\Span}{Span}

\newcommand{\geqs}{\geqslant}
\newcommand{\leqs}{\leqslant}

\newtheorem{theorem}{Theorem}[section]

\newtheorem{corollary}[theorem]{Corollary}
\newtheorem{conjecture}[theorem]{Conjecture}
\newtheorem{proposition}[theorem]{Proposition}

\theoremstyle{definition}

\theoremstyle{remark}
\newtheorem{remark}[theorem]{Remark}

\begin{document}
 
\title[Note on M\"obius uncertainty for posets]{A note on the M\"obius uncertainty principle for posets}
\author{Anurag Sahay}
\address{Department of Mathematics, Purdue University, West Lafayette, IN 47907, USA}
\email{\href{mailto:anuragsahay@purdue.edu}{anuragsahay@purdue.edu}}

\begin{abstract}

We consider two generalizations of Pollack's uncertainty principle for M\"obius inversion \cite{pollack} to locally finite posets. 

The first generalization was previously studied by Goh \cite{goh}. Here, we provide a simplified sufficient criterion for the uncertainty principle to hold. We also provide a necessary criterion for the same which, in particular, disproves Goh's conjectural characterization of posets for which an uncertainty principle holds. Nevertheless, we prove that Goh's conjecture indeed holds when the poset forms a lattice.

The second generalization is new and applies to posets with reduced incidence algebras of a certain form. Here, we make some preliminary observations, including the fact that the uncertainty principle holds for the poset of finite subsets of natural numbers and the poset of finite dimensional subspaces of $\F_q^\infty$. Our proofs in these settings are quite different from the proof for the poset of natural numbers under divisibility. \end{abstract}

\maketitle

\section{Introduction}

In the short note \cite{pollack} (see also \cite{pollacksanna,sanna}), Pollack proves the following
\begin{theorem}[Pollack] \label{thm:pollack}
Let $f,g: \N \longrightarrow \C$ be arithmetic functions which are not identically zero such that
\begin{equation} \label{gis1cf} g(n) = \sum_{d \mid n} f(d).\end{equation}
Then, either $\supp(f)$ or $\supp(g)$ is infinite. Here, 
\[ \supp(f) = \{ n\in \N : f(n) \neq 0 \}, \]
is the support of $f$. 
\end{theorem}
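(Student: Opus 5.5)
Suppose, for contradiction, that both $\supp(f)$ and $\supp(g)$ are finite. Since $f\not\equiv 0$ and $g\not\equiv0$, both supports are nonempty. The plan is to use Dirichlet series and the analytic behaviour of $\zeta(s)$. In the language of Dirichlet convolution, \eqref{gis1cf} says $g = f * \1$. Passing to Dirichlet series, $F(s)=\sum_n f(n)n^{-s}$ and $G(s)=\sum_n g(n)n^{-s}$ are finite sums, hence entire functions of $s$ (Dirichlet polynomials). For $\Re s>1$ we have the identity
\[ G(s) = F(s)\zeta(s). \]

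The key step is to compare singularities. Since $F$ and $G$ are entire, $G(s)/F(s)$ is meromorphic on $\C$ and agrees with $\zeta(s)$ on $\Re s>1$. By uniqueness of continuation, $\zeta = G/F$ wherever $F\neq0$.

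Now $\zeta$ has a simple pole at $s=1$, so $F(1)=0$ is forced, with $G(1)\neq0$ or at least $\operatorname{ord}_{s=1}G<\operatorname{ord}_{s=1}F$. That alone is not a contradiction. Instead I would use the growth of $\zeta$ along a vertical line. A Dirichlet polynomial is bounded on every vertical line $\Re s=\sigma$. Choose $\sigma<0$, say $\sigma=-1$. By the functional equation and Stirling's formula, $|\zeta(-1+it)|\asymp |t|^{3/2}$ as $|t|\to\infty$. Meanwhile $|G|$ is bounded there, so $|F(-1+it)|\ll |t|^{-3/2}$.

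To turn this into a contradiction I need a lower bound for $F$ on the line. This is the main obstacle, since a Dirichlet polynomial can be small at isolated points. I would argue via almost periodicity: $F(-1+it)=\sum_n f(n)n\,n^{-it}$ is a trigonometric polynomial in $t$ with distinct frequencies $\log n$. By the mean value theorem for Dirichlet polynomials,
\[ \frac1T\int_T^{2T}|F(-1+it)|^2\,dt \to \sum_n |f(n)|^2n^2>0, \]
which contradicts $F(-1+it)\to0$.

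An alternative, cleaner elementary route, which I would keep as a fallback, avoids analysis entirely. Let $P$ be the finite set of primes dividing elements of $\supp f\cup\supp g$, and pick a prime $p\notin P$. For any $m$, the relation $g(pm)=\sum_{d\mid pm}f(d)=\sum_{d\mid m}f(d)=g(m)$ holds, because no divisor involving $p$ lies in $\supp f$. Hence $g(pm)=g(m)$ for all $m$. Since $\supp g$ is finite and nonempty, take $m\in\supp g$; then $pm\in\supp g$, and iterating gives the infinitely many elements $p^km\in\supp g$, a contradiction. This is much shorter and would be my actual proof; the only care needed is checking that divisors of $pm$ divisible by $p$ contribute zero.
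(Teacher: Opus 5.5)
Both of your routes are correct, and neither is the paper's.

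The paper fixes $S=\supp(f)$ and writes $g(n)=\sum_{d\in S}f(d)\1_{d\mid n}$. It then notes that there are infinitely many $n$ divisible by exactly one $d\in S$, for example $n=dp$ with $d$ minimal in $S$ under divisibility and $p$ a prime dividing no element of $S$. For those $n$ one gets $g(n)=f(d)\neq0$. This is exactly the argument that abstracts to the hypothesis $\cH_\N$ of Theorem~\ref{thm:sufficient}.

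Your elementary route uses the same ``fresh prime'' idea but packages it as an invariance, $g(pm)=g(m)$ for all $m$, and then propagates a single point of $\supp(g)$ along $p^km$. Only $\supp(f)$ matters in choosing $p$; including $\supp(g)$ is harmless. Compared with the paper:
\begin{itemize}
\item Your version avoids choosing a minimal element of $S$. The paper's version instead pins down the value $g(n)=f(d)$ directly.
\item Your invariance step relies on adjoining a new element to an arbitrary $m$, as $U\mapsto U\cup\{v\}$ does in $\sF(\N)$. The paper's formulation isolates exactly the order-theoretic input needed ($\cH_\N$), so it is the one that carries over to general posets.
\end{itemize}

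Your analytic route is also valid. The ingredients are $G=F\zeta$ on $\C\setminus\{1\}$, $|\zeta(-1+it)|\asymp|t|^{3/2}$, and $\frac1T\int_T^{2T}|F(-1+it)|^2\,dt\to\sum_n|f(n)|^2n^2>0$. Like the periodicity argument in the paper's remark, it never uses the infinitude of primes, but it is much heavier than necessary. If you want an analytic proof, the trivial zeros of $\zeta$ give it at once: $G(-2k)=F(-2k)\zeta(-2k)=0$ for every $k\geqs1$, i.e.\ $\sum_n g(n)n^{2k}=0$. This fails for large $k$, because the term with the largest $n\in\supp(g)$ dominates.
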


Since \eqref{gis1cf} is equivalent to
\[ f(n) = \sum_{d\mid n} \mu\Big(\frac{n}{d}\Big) g(d) \]
by M\"obius inversion, Pollack calls this phenomenon ``an uncertainty principle for the M\"obius transform". 

Our goal in this note is to study this phenomenon in the context of more general locally finite posets; of these, the natural numbers under divisibility form a basic example. The reader unfamiliar with this theory may consult Rota's celebrated work \cite{rota} or Stanley's book \cite[Chapter 3]{stanley} before proceeding. The articles \cite{rota4fq,rota6reduced} shall also be relevant. 

To set terminology, we recall some notation. Let $(P, \leqs)$ be a poset which is locally finite. In other words, for any $x,y \in P$ with $x \leqs y$, the interval
\[ [x,y] := \{ z \in P : x \leqs z \leqs y \} \]
is always finite. For any ring $R$, one may then define the incidence algebra $\sI(P,R)$ of functions $f: P\times P \longrightarrow R$ with the property that
\[ f(x,y) = 0 \text{ whenever } x \centernot \leqs y. \]
In other words, $f$ is supported on tuples $(x,y)$ for which $[x,y]$ is an interval in $P$. The set $\sI(P,R)$ naturally has the structure of an $R$-algebra, where the multiplication $\ast$ is defined by
\[ (f\ast g)(x,y) = \sum_{x \leqs z \leqs y} f(x,z)g(z,y). \]
By the local finiteness property, this is a finite sum and hence, there are no issues of convergence. The zeta function of the poset $\zeta_P \in \sI(P,R)$ plays a distinguished role; it is defined by
\[ \zeta_P(x,y) = \begin{cases} 1 & \text{ if }x \leqs y, \\ 0 &\text{ otherwise.} \end{cases} \]
The zeta function is always invertible under $\ast$, and the inverse is the M\"obius function of the poset $\mu_P \in \sI(P,R)$ which can be defined recursively by
\begin{equation} \label{eq:mudef} \mu_P(x,y) = \begin{cases} 1 & \text{ if } x = y, \\ - \sum_{x \leqs z < y} \mu_P(x,z) & \text{ otherwise.} \end{cases} \end{equation}
Note that the sum is empty whenever $x \centernot \leqs y$. 

For any poset denoted by $P$, we will assume $P$ has a minimum element, which we denote by $\hat{0}$. Thus, $\hat{0} \leqs x$ for every $x \in P$. We will pay special attention to the subset $\sJ(P,R) \subset \sI(P,R)$ of functions $f:P \times P \longrightarrow R$ which are supported on intervals of the form $[\hat{0},x]$. Any such function can be identified with a function $f: P \longrightarrow R$ and vice-versa by setting $f(x) = f(\hat{0},x)$; we make this identification with no further comment.

We restrict attention to $R = \C$. The substance of our ensuing discussion will not be materially affected\footnote{Except in the proof of Proposition~\ref{prop:finsubsetN}, where $R$ must be an embedded subring of $\C$ and in the proof of Proposition~\ref{prop:finvectFq}, where $R$ must be a field of characteristic 0.} by assuming that $R$ is an arbitrary ring of characteristic $0$ instead. We will drop the notational dependence of $\sI$ and $\sJ$ on $R$ entirely and on $P$ when doing so causes no confusion.

With this context, we are ready to discuss Goh's generalization \cite{goh} of Pollack's work. For $f \in \sJ$, we define 
\[ \supp(f) = \{ x \in P : f(x) \neq 0 \} \]
to be the support of $f$ when viewed as a function on $P$. We say that $P$ has the M\"obius uncertainty property $\cM$ if for any nonzero functions $f,g \in \sJ(P)$ with
\[ g(z) = \sum_{x \leqs z} f(x) \text{ for every } z\in P, \]
we have that either $\supp(f)$ or $\supp(g)$ is infinite. Goh investigated when a poset $P$ has $\cM$, and proved a sufficient condition \cite[Theorem 2.1]{goh} for it. 

Building on his work, we will provide a simpler hypothesis than that of Goh \cite[Theorem 2.1]{goh} but which is nevertheless good enough for the applications \cite[\S 3]{goh} he was interested in. Further, we will show that a condition not terribly different from our sufficient condition is actually necessary for $P$ to have $\cM$. The sufficient condition is
\begin{theorem} \label{thm:sufficient}
Let $P$ be locally finite poset having $\hat{0}$ which has the property that  every nonempty finite subset $S \subset P$ contains an element $z \in S$ so that for infinitely many $x \in P$, one has $x \geqs z$ but $x \centernot \geqs y$ for any $y \in S\setminus \{z\}$. Then, $P$ has the M\"obius uncertainty property.
\end{theorem}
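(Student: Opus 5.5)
Suppose, for contradiction, that $f,g \in \sJ(P)$ are both nonzero with $g(z)=\sum_{x\leqs z} f(x)$ for all $z$, and that $\supp(f)$ and $\supp(g)$ are both finite. The plan is to apply the hypothesis to $S=\supp(f)$, which is nonempty and finite because $f\neq 0$. This produces an element $z\in S$ and an infinite set $X$ of elements $x\geqs z$ with $x\centernot\geqs y$ for every $y\in S\setminus\{z\}$.

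For each $x\in X$, compute $g(x)$. The only elements of $\supp(f)$ lying below $x$ are those $y\in S$ with $y\leqs x$, and by the choice of $X$ the only such $y$ is $z$ itself. Hence
\[ g(x)=\sum_{y\leqs x} f(y) = \sum_{\substack{y\in S\\ y\leqs x}} f(y) = f(z)\neq 0. \]
So every $x\in X$ lies in $\supp(g)$. Since $X$ is infinite, $\supp(g)$ is infinite, which contradicts our assumption. Hence at least one of the two supports is infinite, and $P$ has property $\cM$.

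This argument does not need Möbius inversion, and it uses the assumption $g\neq 0$ only to rule out the trivial pair $f=g=0$. Local finiteness and the existence of $\hat 0$ are needed only so that the sum defining $g$ is finite and $\sJ$ makes sense. The one point that needs care is that the hypothesis is applied to the support of $f$, not of $g$. I do not anticipate any real obstacle: it suffices to observe that the hypothesis isolates a single summand, namely $f(z)$, in the expression for $g(x)$ for infinitely many $x$.
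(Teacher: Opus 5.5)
Your proof is correct and is essentially the paper's argument: apply the hypothesis to $S=\supp(f)$ and observe that $g(x)=f(z)\neq 0$ for each of the infinitely many $x$ that lie above $z$ but above no other element of $S$. The only cosmetic difference is that you frame it as a contradiction, while the paper shows directly that $\supp(g)$ is infinite whenever $\supp(f)$ is finite and nonempty.
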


From this we obtain the following 
\begin{corollary} \label{cor:posetswithM}
The following posets have the M\"obius uncertainty property:
\begin{itemize}
\item The poset of natural numbers $\N$ under divisibility.
\item The poset $\sF(\N)$ of finite subsets of natural numbers under subset inclusion.
\item The poset $\sV(\Fq^\infty)$ of finite-dimensional subspaces of $\Fq^\infty$ under subspace inclusion.
\end{itemize}
\end{corollary}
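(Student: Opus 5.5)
The plan is to apply Theorem~\ref{thm:sufficient} to each of the three posets. Each is locally finite and has a minimum: $1$ in $\N$, $\emptyset$ in $\sF(\N)$, and the zero subspace in $\sV(\Fq^\infty)$. Local finiteness is standard in each case: a natural number has finitely many divisors, a finite set has finitely many subsets, and a finite-dimensional subspace of $\Fq^\infty$ has finitely many subspaces since $\Fq$ is finite. So it remains, for each nonempty finite $S$, to choose $z\in S$ and produce infinitely many $x\geqs z$ lying above no other element of $S$. In all three cases I will take $z$ to be a \emph{maximal} element of $S$, which exists because $S$ is finite and nonempty. The witnesses $x$ are built by ``adjoining fresh independent material'' to $z$.

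For $\N$ under divisibility, let $z$ be maximal in $S$ and take $x=zp$ with $p$ a prime not dividing any element of $S$; there are infinitely many such primes. Suppose $y\in S\setminus\{z\}$ divides $zp$. Since $p\nmid y$, we have $\gcd(y,p)=1$, so $y\mid z$. Maximality of $z$ then forces $y=z$, a contradiction. For $\sF(\N)$ the argument is the same with $x=z\cup\{n\}$ for $n\notin\bigcup S$. If $y\subseteq z\cup\{n\}$ with $n\notin y$, then $y\subseteq z$, so $y=z$ by maximality.

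For $\sV(\Fq^\infty)$, let $z$ be maximal and let $W=\sum_{y\in S}y$, which is finite dimensional. Take $x=z\oplus\langle v\rangle$ with $v\notin W$. Different choices of $v$ can give the same $x$, so I will take $v=e_n$ with $n$ large enough that $e_n\notin W$. The resulting subspaces $z+\langle e_n\rangle$ are pairwise distinct, because $z+\langle e_n\rangle=z+\langle e_m\rangle$ would put $e_n-e_m$ in $z\subseteq W$, which is impossible for $n,m$ beyond the support of $W$. This gives infinitely many $x$.

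The one step needing care, and the main obstacle, is showing that $y\subseteq z+\langle v\rangle$ with $y\in S$ forces $y\subseteq z$. Take $w\in y$ and write $w=u+cv$ with $u\in z$. Then $cv=w-u\in W$, so $c=0$ because $v\notin W$. Hence $y\subseteq z$, and maximality gives $y=z$. Once this is in hand, all three posets satisfy the hypothesis of Theorem~\ref{thm:sufficient}, and the corollary follows.
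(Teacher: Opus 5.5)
\textbf{There is a genuine error, and it is the same in all three cases.} You take $z$ to be a \emph{maximal} element of $S$, but the step you then need uses \emph{minimality}.

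In each case your argument correctly shows that any $y\in S$ lying below the witness $x$ satisfies $y\leqs z$: that is, $y\mid z$, or $y\subseteq z$ as sets, or $y\subseteq z$ as subspaces. Maximality of $z$ in $S$ only says that no element of $S$ lies strictly \emph{above} $z$. It says nothing about elements of $S$ strictly below $z$, so the sentence ``maximality then forces $y=z$'' is false.

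In fact the maximal choice cannot work at all. If some $y\in S$ lies strictly below $z$, then every $x\geqs z$ also satisfies $x\geqs y$. For example, take $S=\{1,2\}$ in $\N$ under divisibility. The only maximal element is $z=2$, and every multiple of $2$ is a multiple of $1$, so no $x\geqs z$ avoids the other element of $S$. The same failure occurs for $S=\{\emptyset,\{1\}\}$ in $\sF(\N)$ and for $S=\{\{0\},L\}$ with $L$ a line in $\sV(\Fq^\infty)$.

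\textbf{The fix is to take $z$ minimal in $S$,} which exists because $S$ is finite and nonempty. Then $y\leqs z$ with $y\in S$ forces $y=z$, and the rest of your argument goes through unchanged:
\begin{itemize}
\item a fresh prime $p$ not dividing any element of $S$;
\item a fresh element $n\notin\bigcup S$;
\item a fresh basis vector $e_n$ outside the support of $W=\sum_{y\in S}y$, together with your check that $cv\in W$ forces $c=0$.
\end{itemize}
One small imprecision in your distinctness check: $z+\langle e_n\rangle=z+\langle e_m\rangle$ gives $e_n-ce_m\in z$ for some $c\in\Fq$, not necessarily $e_n-e_m\in z$. The same support argument still rules this out.

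With this correction your proof is essentially the paper's. The paper picks a minimal $U\in S$ and uses $V_j=U\cup\{v_j\}$, respectively $V_j=\Span\{U,v_j\}$, with $v_j$ avoiding everything in $S$. It then concludes with exactly the step ``$W\subseteq U$, so $W=U$ by minimality.''
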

The first of these is Theorem~\ref{thm:pollack}, the second is \cite[Proposition 3.2]{goh}, but the third appears to be novel. 

\begin{remark}
Theorem~\ref{thm:sufficient} is easy to apply; for example, it can be applied \emph{mutatis mutandis} to the poset under inclusion of integral ideals of the ring of integers $\mathcal{O}_K$ of a global field $K.$ 
\end{remark}

The necessary condition is
\begin{theorem} \label{thm:necessary}
Let $P$ be locally finite poset having $\hat{0}$ which has the M\"obius uncertainty property. Then, any subset $S \subset P$ satisfying the cardinality assumption $1 \leqs |S| \leqs 2$, contains an element $z \in S$ so that for infinitely many $x \in P$, one has that $x \geqs z$ but $x \centernot \geqs y$ for any $y \in S \setminus \{z\}$. 
\end{theorem}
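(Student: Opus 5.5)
We argue by contrapositive: if the conclusion fails for some $S$ with $1\leqs |S|\leqs 2$, we build nonzero $f,g\in\sJ$ with $g=f\ast\zeta$ (that is, $g(z)=\sum_{x\leqs z}f(x)$) and both supports finite. Such a pair contradicts $\cM$. The whole proof is two explicit constructions, one for $|S|=1$ and one for $|S|=2$.

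\emph{Case $|S|=1$.} Say $S=\{z\}$ and only finitely many $x$ satisfy $x\geqs z$, so the up-set $U_z=\{x : x\geqs z\}$ is finite. We take $f=\delta_z$, the indicator of $z$. Then $g(w)=\sum_{x\leqs w}\delta_z(x)=\1_{U_z}(w)$. Both $f$ and $g$ are nonzero and finitely supported, which contradicts $\cM$.

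\emph{Case $|S|=2$.} Say $S=\{y,z\}$ with $y\neq z$, and suppose both $A_z=\{x : x\geqs z,\ x\centernot\geqs y\}$ and $A_y=\{x : x\geqs y,\ x\centernot\geqs z\}$ are finite. We take $f=\delta_z-\delta_y$, which is nonzero since $y\neq z$. Then
\[ g(w)=\1_{w\geqs z}-\1_{w\geqs y}. \]
This vanishes when $w$ lies above both $y$ and $z$, and when $w$ lies above neither. It is nonzero exactly on $A_z\cup A_y$, which is finite. Again we contradict $\cM$.

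In each case the hypothesis that the conclusion fails at $S$ is exactly the finiteness of the set on which $g$ is nonzero. The cardinality restriction matters only because with three or more elements no signed combination of indicators cancels cleanly on every overlap pattern. Both constructions are routine, so I expect no real obstacle. The one thing to check is that the failure of the conclusion for $S$ means that \emph{every} choice of $z\in S$ fails, so we may use the finiteness of both $A_y$ and $A_z$ at once.
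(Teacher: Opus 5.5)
Your proof is correct and follows the paper's argument: it is the contrapositive, using $f=\1_z$ when the up-set of $z$ is finite and $f=\1_z-\1_y$ when both one-sided differences of the up-sets are finite, so that $\supp(g)$ lies in that finite symmetric difference. The only difference is cosmetic: in the $|S|=1$ case the paper first replaces $z$ by a maximal element of its up-set to get $\supp(g)=\{z\}$, whereas you observe directly that $\supp(g)=U_z$ is already finite, which suffices.
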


For $k \in \N$, let us say that the poset $P$ has property $\cH_k$ if every subset $S \subset P$ with cardinality $|S| = k$ contains an element $z \in S$ so that for infinitely many $x \in P$ one has that $x \geqs z$ but $x \centernot \geqs y$ for any $y \in S \setminus \{z\}$. Further, let us say that $P$ has $\cH_\N$ if it has $\cH_k$ for every $k \in \N$. Thus, the above two theorems may be succinctly summarized as
\begin{equation} \label{eqn:mainthm} P \text{ has }\cH_\N \implies P \text{ has }\cM \implies P \text{ has }\cH_{1} \text{ and } \cH_2. \end{equation}

Goh speculates on \cite[p.~6]{goh} which of his conditions is truly necessary. In particular, he essentially conjectures the following:
\begin{conjecture}[Goh] \label{conj: goh}
A locally finite poset $P$ having $\hat{0}$ has the M\"obius uncertainty property if and only if for every $x \in P$, the set
\[ \{ y \in P : \mu_P(x,y) \neq 0 \} \]
is infinite.
\end{conjecture}

Writing $\cG$ for the M\"obius nonvanishing property, i.e., the hypothesis that for every $x \in P$ there are infinitely many $y$ so that $\mu_P(x,y) \neq 0$, we see that Goh is asserting that
\[ P\text{ has }\cM \iff P\text{ has }\cG. \]
Note that Goh proves in \cite[Proposition 4.1]{goh} that the forward implication holds. We disprove this conjecture by showing that the reverse implication does not. In particular, we can show
\begin{theorem} \label{thm:gohcounterexample}
There exists a locally finite poset which has the property $\cG$ but does not have the property $\cH_2$. In particular, since $\cM \implies \cH_2$, there exists a locally finite poset which has the M\"obius nonvanishing property but does not have the M\"obius uncertainty property.
\end{theorem}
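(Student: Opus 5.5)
We construct the counterexample directly: take $\N$ under divisibility, remove $1$, and put two incomparable elements $a,b$ underneath everything that remains, with a new minimum $\hat{0}$ below them. Concretely, let
\[ P = \{\hat{0}, a, b\} \cup \{ n \in \N : n \geqs 2\}, \]
ordered as follows. $\hat{0}$ is below everything. $a$ and $b$ are incomparable, and each lies below every $n \geqs 2$. For $m,n \geqs 2$ we set $m \leqs n$ if and only if $m \mid n$. This is clearly a partial order with minimum $\hat{0}$. It is locally finite, since every interval is contained in $\{\hat{0},a,b\}$ together with the divisors of some fixed $n$.

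\emph{Failure of $\cH_2$.} Take $S=\{a,b\}$. Every element $x \geqs a$ is either $a$ itself or some $n \geqs 2$, and every such $n$ satisfies $n \geqs b$. So the only $x$ with $x \geqs a$ but $x \centernot\geqs b$ is $x=a$. Symmetrically, the only $x$ with $x \geqs b$ but $x \centernot\geqs a$ is $x=b$. Hence neither choice of $z \in S$ has infinitely many such $x$, and $P$ fails $\cH_2$.

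\emph{Property $\cG$.} We compute $\mu_P$ from \eqref{eq:mudef} in three cases.
\begin{itemize}
\item For $n \geqs 2$, the interval $[n,m]$ is isomorphic to the divisor interval $[1,m/n]$ in $\N$. Hence $\mu_P(n,m)=\mu(m/n)$, which is nonzero for infinitely many $m$, for instance $m=np$ with $p$ prime.
\item For $x=a$, write $\nu(n)=\mu_P(a,n)$ for $n \geqs 2$, and put $\nu(1):=\mu_P(a,a)=1$. For $n \geqs 2$ the interval $[a,n]$ consists of $a$ together with the divisors $d \geqs 2$ of $n$. The recursion therefore reads $\sum_{d\mid n}\nu(d)=0$ for all $n\geqs 2$, with $\nu(1)=1$. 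This is exactly the defining recursion of the classical M\"obius function, so $\nu=\mu$. Thus $\mu_P(a,n)\neq 0$ for every squarefree $n \geqs 2$, an infinite set. The same argument applies to $x=b$.
\item For $x=\hat{0}$, we have $\mu_P(\hat{0},a)=\mu_P(\hat{0},b)=-1$. Set $h(n)=\mu_P(\hat{0},n)$ for $n \geqs 2$. The recursion gives $1-1-1+\sum_{d\mid n,\, d\geqs 2}h(d)=0$, that is, $\sum_{d\mid n,\, d\geqs 2} h(d)=1$. Putting $h(1):=-1$, this becomes $\sum_{d\mid n}h(d)=0$ for all $n\geqs 2$, with $h(1)=-1$. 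Hence $h=-\mu$, which is again nonzero at all squarefree $n \geqs 2$.
\end{itemize}
So every $x\in P$ has infinitely many $y$ with $\mu_P(x,y)\neq 0$, i.e.\ $P$ has $\cG$.

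The only step that needs care is the choice of construction. The naive version, which places a single new minimum $c$ of a copy of $\N$ above both $a$ and $b$, makes $c$ the unique atom of every interval $[a,y]$ with $y \neq c$. Weisner's theorem then forces $\mu_P(a,y)=0$ for all such $y$, so $\cG$ fails. Deleting $1$ avoids this, because the elements $a$ and $b$ each take over the role of $1$ in their own intervals.

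Finally, by Theorem~\ref{thm:necessary} we have $\cM\implies\cH_2$. Since $P$ fails $\cH_2$, it also fails $\cM$ while having $\cG$, which disproves Conjecture~\ref{conj: goh}.
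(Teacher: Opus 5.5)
Your proof is correct, and it uses a different, leaner poset than the paper.

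\textbf{The paper's construction.} The paper takes $P_1 = A\times D$, an infinite antichain of copies of the divisor poset. It puts two incomparable elements $z_1,z_2$ below all of $P_1$ and a minimum $u$ below those, and then hangs a separate copy $D_0$ of $D\setminus\{1\}$ above $u$. Property $\cG$ is checked at $z_j$ by observing that each minimal element $m_\ell=(\ell,1)$ covers $z_j$, so $\mu_P(z_j,m_\ell)=-1$ for every $\ell$. Every other element lies in some copy of $D$, and the $D_0$ branch is there to supply this for $u$.

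\textbf{Your construction.} You use a single copy of $D\setminus\{1\}$ above $a$ and $b$, so the infinitely many minimal elements are the primes rather than an antichain. Property $\cG$ at $a$ and $b$ follows from the isomorphism $[a,n]\cong[1,n]_D$. At $\hat{0}$, your inversion computation $\mu_P(\hat{0},n)=-\mu(n)$ removes the need for an auxiliary branch; even just $\mu_P(\hat{0},p)=1$ for primes $p$ would suffice.

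\textbf{What each buys.} The paper's version needs no M\"obius computation beyond ``an empty open interval gives $-1$''. Its block $A\times D$, with the minimal elements $m_\ell$, is also reused verbatim in the $\cH_3$ counterexample of Theorem~\ref{thm:P3counterexample}. Your version buys economy. Incidentally, your computation at $\hat{0}$ shows the paper's $D_0$ is not actually needed, since there $\mu_P(u,m_\ell)=-(1-1-1)=1$ for every $\ell$.

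\textbf{Common mechanism.} Both constructions rely on two incomparable elements with identical strict up-sets, which kills $\cH_2$. Their common up-set has infinitely many minimal elements, so the join of the two elements does not exist. That is exactly what prevents the unique-atom vanishing you correctly identified in the naive construction, and it is consistent with Theorem~\ref{thm:gohtrue} ruling out lattice counterexamples.
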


\begin{remark} \label{rem:goh}
Conjecture~\ref{conj: goh} as stated above is a special case of Conjecture 6 in an earlier arXiv version (\href{https://arxiv.org/abs/2302.02466v2}{arXiv: 2302.02466v2}) of \cite{goh}. In the published version, Goh is more restrictive, requiring that $P$ be a join semi-lattice\footnote{A poset $P$ is a join semi-lattice if every finite and nonempty subset $S\subset P$ has a least upper bound in $P$. If every finite and nonempty subset has both a least upper bound and a greatest lower bound, then $P$ is simply called a lattice.}. Goh has indicated to me in private communication that this change was made for aesthetic reasons; he did not have a counterexample in mind when making this change.
\end{remark}

Let us state (a special case of) the published version of Goh's conjecture \cite[Conjecture 4.3]{goh} explicitly. 

\begin{conjecture}[Goh] \label{conj: gohl}
A locally finite join semi-lattice $L$ having $\hat{0}$ has the M\"obius uncertainty property if and only if for every $x \in L$, the set
\[ \{ y \in L : \mu_L(x,y) \neq 0 \} \]
is infinite.
\end{conjecture}

Our counterexample is manifestly not a semi-lattice, so it does not disprove this conjecture. To the contrary, we have the following 

\begin{theorem} \label{thm:gohtrue}
If $L$ is a locally finite lattice having $\hat{0}$, then the following are equivalent:
\begin{itemize}
\item The lattice $L$ has the M\"obius uncertainty property $\cM$.
\item The lattice $L$ has the M\"obius nonvanishing property $\cG$.
\item The lattice $L$ has the property $\cH_\N$.
\end{itemize}
Thus,
\[ L \text{ has } \cM \iff L \text{ has } \cG \iff L \text{ has } \cH_\N. \]
\end{theorem}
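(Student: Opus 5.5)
The implications $\cH_\N \Rightarrow \cM$ and $\cM \Rightarrow \cG$ are already available: the first is Theorem~\ref{thm:sufficient}, and the second is Goh's \cite[Proposition 4.1]{goh}. Neither uses the lattice structure. So the whole content of Theorem~\ref{thm:gohtrue} is the implication $\cG \Rightarrow \cH_\N$ for lattices. I would prove it by contraposition: assume some finite $S$ violates $\cH_{|S|}$, and produce an element $w \in L$ such that $\mu_L(w,\cdot)$ has finite support.

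\emph{Reductions.} First take $|S|=1$, say $S=\{z\}$, and suppose only finitely many $x$ satisfy $x \geqs z$. Then $\mu_L(z,y)\neq 0$ forces $y \geqs z$, so $\mu_L(z,\cdot)$ is finitely supported and $\cG$ fails at $w=z$. In general, choose a violating $S$ of minimal cardinality $k\geqs 2$. For $z\in S$, write $A_z$ for the finite set of $x \geqs z$ with $x\centernot\geqs y$ for all $y\in S\setminus\{z\}$.

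\emph{Main step.} In a lattice, $[x\geqs s \text{ and } x \geqs t] = \zeta_L(s\vee t, x)$, so products of indicators of principal up-sets are again such indicators. Put $\chi_s(x)=\zeta_L(s,x)$ and consider
\[ E(x) = \sum_{s\in S}\chi_s(x)\prod_{t\in S\setminus\{s\}}\bigl(1-\chi_t(x)\bigr) = \1_{\bigcup_{z} A_z}(x). \]
This function is finitely supported by hypothesis. Expanding the product gives
\[ E = \sum_{\emptyset\neq T\subseteq S} c_T\, \zeta_L\Bigl(\bigvee T,\cdot\Bigr), \qquad c_T=(-1)^{|T|-1}|T|. \]
Inverting with $\mu_L$ then yields $f=\sum_T c_T\,\delta_{\bigvee T}$, with $f\ast\zeta_L = E$ on $\sJ$, and this $f$ is also finitely supported. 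The issue is to turn this into a statement about a single row $\mu_L(w,\cdot)$.

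\emph{Key difficulty: choosing $w$.} I would take $w$ to be a minimal element of $\{\bigvee T : c'_T \neq 0\}$, where the $c'_T$ are the coefficients after collecting joins that coincide. Using minimality of $|S|$, I would try to show that one may even take $w=z_0\in S$ with nonzero collected coefficient. For example, if two singletons coincide, we contradict distinctness; if a singleton equals a join of a larger set, we can shrink $S$. Then the identity
\[ \zeta_L(w,\cdot) = \frac{1}{c'_w}\Bigl(E - \sum_{v\neq w} c'_v\,\zeta_L(v,\cdot)\Bigr) \]
expresses $\zeta_L(w,\cdot)$, up to a finite correction, as a combination of $\zeta_L(v,\cdot)$ with $v>w$. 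Convolving on the right by $\mu_L$ and evaluating at $(w,y)$, I would show $\mu_L(w,y)=0$ for all $y$ outside a finite set, namely the finitely many $y$ lying below some element of $\operatorname{supp}E$ together with the joins $v$.

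As an alternative route for this step, I would apply Rota's crosscut theorem on $[w,y]$ with the crosscut built from $S\cap[w,y]$. For $y \notin \bigcup_z A_z$ that is large, every element of $S$ below $y$ comes paired with another, and I would expect the alternating count of spanning subsets to cancel. I expect this cancellation argument, i.e.\ certifying $\mu_L(w,y)=0$ for all but finitely many $y$ without accidental coefficient collapse, to be the main obstacle. It is also exactly where the existence of joins (the lattice hypothesis) must be used, consistent with Theorem~\ref{thm:gohcounterexample}.
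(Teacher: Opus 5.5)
Your reduction to $\cG\Rightarrow\cH_\N$ and the case $|S|=1$ are fine, but the core step, showing $\mu_L(w,y)=0$ for all but finitely many $y$, is not proved, and the route you propose cannot deliver it. Your $E$ and $f=\sum_T c_T\delta_{\bigvee T}$ form a M\"obius pair with both members finitely supported. Moreover $f\neq 0$: for $z_0$ minimal in $S$, only $T=\{z_0\}$ has join $z_0$, so $c'_{z_0}=1$. This shows that $L$ fails $\cM$. That is the wrong conclusion here, because the only available link is $\cM\Rightarrow\cG$, so $\neg\cM$ does not give $\neg\cG$.

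Your attempt to extract the row $\mu_L(w,\cdot)$ from the identity for $\zeta_L(w,\cdot)$ is circular. Right-convolving $\zeta_L(v,\cdot)$ with $\mu_L$ just returns $\delta_v$, so you recover $E\ast\mu_L=f$ and learn nothing about $\mu_L(w,y)$. That quantity depends on the internal structure of the intervals $[w,y]$, not on linear relations among up-set indicators. Minimality of $w$ among the joins also does not make the other $v$ lie above $w$. The crosscut alternative fails as stated too: $S\cap[w,y]$ is not a crosscut of $[w,y]$ in general, and no actual cancellation mechanism is given.

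The missing idea is structural: control what sits immediately above $w$. The paper takes $x$ minimal in $S$ and replaces it by a maximal element of the finite set $A_x$. Then every $y>x$ dominates some $v\in S\setminus\{x\}$, so $y\wedge z\geqs x\vee v>x$, where $z$ is the join of $\{x\}\cup S$. Using $[x,y]\cap[x,z]=[x,y\wedge z]$, the part of the recursion $\mu_L(x,y)=-\sum_{x\leqs u<y}\mu_L(x,u)$ over $u\leqs z$ equals $-\sum_{x\leqs u\leqs y\wedge z}\mu_L(x,u)=0$. Induction on $|[x,y]\setminus[x,z]|$ then gives $\mu_L(x,y)=0$ for every $y\not\leqs z$.

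Your crosscut idea can be repaired in the same spirit, and then your guess $w=z_0$ works for any minimal $z_0\in S$; the minimal-cardinality reduction is unnecessary. Every upper cover $c$ of $z_0$ either lies in $A_{z_0}$ or dominates some $v\in S\setminus\{z_0\}$. In the latter case $z_0<z_0\vee v\leqs c$, since $v\not\leqs z_0$ by minimality, which forces $c=z_0\vee v$. So $z_0$ has only finitely many upper covers. The crosscut theorem, applied with the atoms of the finite lattice $[z_0,y]$, shows that $\mu_L(z_0,y)\neq 0$ forces $y$ to be a join of such covers, which is a finite set of possibilities. The vanishing is term by term, not by cancellation.
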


It may seem that the hypotheses of this theorem are stronger than those of Conjecture~\ref{conj: gohl}, but in fact any locally finite join semi-lattice having $\hat{0}$ is a lattice; see \S\ref{sec: gohl}. Thus, this settles Conjecture~\ref{conj: gohl} affirmatively. 

In light of \eqref{eqn:mainthm}, Theorem~\ref{thm:gohcounterexample}, and Theorem~\ref{thm:gohtrue}, I believed for a while that $\cM \iff \cH_\N$ may be the correct structural characterisation of when the M\"obius uncertainty principle holds for general posets. Unfortunately, this hope was dashed when I found an example showing that the latter implication in \eqref{eqn:mainthm} is closer to being tight. We will show the following
\begin{theorem} \label{thm:P3counterexample}
There exists a locally finite poset which has the M\"obius uncertainty property $\cM$ but does not have the property $\cH_3$. Thus, $\cM \centernot\implies \cH_3$ and hence $\cM \centernot\implies \cH_\N$. 
\end{theorem}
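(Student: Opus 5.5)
The plan is to build an explicit poset by hand and check both properties directly. We want a three-element set $S=\{a,b,c\}$ such that every element above $a$ (other than $a$ itself) is also above $b$ or above $c$, and similarly for $b$ and $c$. At the same time, the poset must be rich enough above these points that a finitely supported pair $(f,g)$ is forced to vanish.

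A first attempt is to add infinitely many maximal points above each pair $\{a,b\},\{b,c\},\{a,c\}$. This fails. Those points impose only three linear constraints on the four unknowns $f(\hat 0),f(a),f(b),f(c)$, and $f(a)=f(b)=f(c)=t$, $f(\hat 0)=-2t$ gives a finitely supported counterexample. So we also add a block above all three of $a,b,c$, which supplies the fourth constraint.

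\textbf{Construction.} Let $P$ consist of $\hat 0$, three atoms $a,b,c$, and four disjoint blocks $Q_{ab},Q_{bc},Q_{ac},Q_{abc}$. Each block is a copy of the poset of \emph{nonempty} finite subsets of $\N$ under inclusion. An element of $Q_{\sigma}$ lies above exactly the atoms indexed by $\sigma$ (and above $\hat 0$). Elements in different blocks are incomparable, and inside a block the order is inclusion. Local finiteness is clear, since each interval sits inside $\{\hat0,a,b,c\}$ together with the finitely many subsets of a finite set.

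\textbf{Failure of $\cH_3$.} Take $S=\{a,b,c\}$. Any $x\geqs a$ is either $a$ itself or lies in $Q_{ab}\cup Q_{ac}\cup Q_{abc}$, so it is $\geqs b$ or $\geqs c$. Hence only the single element $x=a$ qualifies for $z=a$, and by symmetry the same holds for $b$ and $c$. So $\cH_3$ fails.

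\textbf{Proof of $\cM$.} Suppose $f,g\in\sJ(P)$ are finitely supported and $g$ is the zeta transform of $f$. Fix a block $Q_\sigma$ and put $C_\sigma=f(\hat0)+\sum_{s\in\sigma}f(s)$.
\begin{itemize}
\item For $T\in Q_\sigma$ we have $g(T)=C_\sigma+\sum_{\emptyset\neq U\subseteq T}f(U)$.
\item Extend $f|_{Q_\sigma}$ to $\sF(\N)$ by setting $F(\emptyset)=C_\sigma$, and let $G$ be its zeta transform on $\sF(\N)$. Then $G(T)=g(T)$ for $T\neq\emptyset$ and $G(\emptyset)=C_\sigma$.
\item So $F$ and $G$ are both finitely supported.
\item By Corollary~\ref{cor:posetswithM}, $\sF(\N)$ has $\cM$, so $F\equiv0$. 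This gives $C_\sigma=0$ and $f\equiv 0$ on $Q_\sigma$.
\end{itemize}
Running this over the four blocks gives the linear system
\[
f(\hat0)+f(a)+f(b)=0,\quad f(\hat0)+f(b)+f(c)=0,\quad f(\hat0)+f(a)+f(c)=0,\quad f(\hat0)+f(a)+f(b)+f(c)=0.
\]
The first three equations force $f(a)=f(b)=f(c)=t$ and $f(\hat0)=-2t$. The fourth then gives $t=0$. Hence $f\equiv0$ on all of $P$, and therefore $g\equiv 0$. So no pair of nonzero finitely supported functions exists, and $P$ has $\cM$.

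The main obstacle is designing the poset so that the equations coming from the upper blocks determine everything below them. The first attempt above shows that blocks over the pairs alone leave a free parameter. The fix is the block $Q_{abc}$, together with choosing blocks without a minimum, so that they cannot act as a new free point; the shift trick with $F(\emptyset)=C_\sigma$ then transfers $\cM$ from $\sF(\N)$ to each block.
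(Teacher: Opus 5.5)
Your proof is correct, but it takes a different route from the paper in both the construction and the $\cM$ argument.

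The paper's poset has a minimum $u$, atoms $a,b,c$, and only three blocks $Q_a,Q_b,Q_c\cong A\times D$, each lying above exactly two atoms. It also attaches a copy $D_0$ of $D\setminus\{1\}$ above $u$. That $D_0$ is the paper's answer to the free parameter you found in your first attempt. The paper's argument runs in three steps:
\begin{itemize}
\item Pollack's theorem applied to $D_0\cup\{u\}\cong D$ forces $f(u)=0$.
\item Evaluating $g$ at the infinitely many minimal elements $(\ell,1)$ of each block gives $f(a)+f(b)=f(a)+f(c)=f(b)+f(c)=0$, hence $f(a)=f(b)=f(c)=0$.
\item Pollack's theorem is then applied separately to each copy of $D$ inside the blocks.
\end{itemize}

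You instead supply the fourth linear constraint by adding a block $Q_{abc}$. Your virtual-bottom device $F(\emptyset)=C_\sigma$ then treats each block in a single step. It transfers $\cM$ from $\sF(\N)$ and yields both $C_\sigma=0$ and $f|_{Q_\sigma}\equiv 0$ at once, leaving only a nonsingular $4\times 4$ linear system.

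Your version is more uniform. It also makes explicit why the blocks must have no minimum and why the three pair blocks alone are insufficient. Your computation $f(a)=f(b)=f(c)=t$, $f(\hat 0)=-2t$ shows that the paper's poset would itself fail $\cM$ without $D_0$.

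The paper's version reuses the $A\times D$ and $u$-plus-$D_0$ gadgets from its counterexample to Goh's conjecture. It relies only on Pollack's theorem for $D$, not on the $\sF(\N)$ case of Corollary~\ref{cor:posetswithM}.
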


This leaves open the possibility that
\[ P \text{ has } \cM \iff P \text{ has } \cH_1 \text{ and } \cH_2, \]
for general posets, but I consider this quite unlikely. 

At this point, we will introduce another way to generalize Theorem~\ref{thm:pollack} using the formalism of reduced incidence algebras from \cite{rota6reduced}. We call a map $t: P \times P \longrightarrow \N \cup \{0\}$ a \emph{typing} on $P$ if it satisfies the following conditions:
\begin{itemize}
\item For $x,y \in P$, $t(x,y) \neq 0$ if and only if $[x,y]$ is an interval in $P$.
\item For any functions $f_1, f_2 \in \sI(P)$ such that $f_j(x,y)$ depends only on $t(x,y)$, we have that $(f_1 \ast f_2)(x,y)$ also only depends on $t(x,y)$.
\end{itemize}
We need only specify $t(x,y)$ when $x \leqs y$, as $t(x,y) = 0$ otherwise. Given a typing on $P$, we call $t(x,y)$ the \emph{type} of $[x,y]$ and we call the set of functions in $\sI$ whose value on $[x,y]$ depends only on its type the reduced incidence algebra of $(P,t)$, which we denote by $\sR$. This is a slightly less general notion than that used in \cite{rota6reduced}, but to see the connection, $[x,y] \sim [u,v]$ iff $t(x,y) = t(u,v)$ is an order compatible equivalence relation on the intervals of $P$ in the sense of \cite[Definition 4.1]{rota6reduced}. The reduced incidence algebra $\sR$ is also an $R$-algebra under the convolution operator $\ast$.

If the map $t$ surjects onto the natural numbers, functions $f \in \sR$ can be thought of as functions on the natural numbers, by defining $f(n) = f(x,y)$ where $[x,y]$ is any interval of type $n$. From now on, we assume $t$ is surjective and make this association. In this notation, one has that
\[ (f\ast g)(n) = \sum_{d, k \in \N} \rbinom{n}{d}{k} f(d) g(k), \]
where $\rbinom{n}{d}{k}$ are the incidence coefficients (or structure coefficients) of $\sR$, which is the number of distinct elements $z$ in an interval $[x,y]$ of type $n$ such that $[x,z]$ is of type $d$ and $[z,y]$ is of type $k$. 

The zeta function satisfies $\zeta_P \in \sR$, and in fact, $\zeta_P(n) = 1$ for every $n \in \N$. Thus, for $f,g \in \sR$ we consider it a M\"obius pair if 
\[ g = \zeta_P \ast f \iff f = \mu_P \ast g. \]
Writing the first equality out as functions on integers, we have that
\begin{equation} \label{eq:redg=f} g(n) = \sum_{d, k \in \N} \rbinom{n}{d}{k} f(k). \end{equation}
Our generalization of the uncertainty principle is now as follows: we say that a poset $P$ with a surjective typing $t$ has the reduced M\"obius uncertainty property $\cR$ if for every $f,g \in \sR$ satisfying \eqref{eq:redg=f} at most one of $\supp_\sR(f)$ or $\supp_\sR(g)$ is finite, where
\[ \supp_\sR(f) = \{ n \in \N : f(n) \neq 0 \} \]
is the support of $f$ when viewed as a function of $\N$. In my view, this is a slightly more natural generalization than the previous one; however we will be only able to make preliminary observations about it. We end the introduction by recording these in the following 
\begin{theorem} \label{thm:reduced}
The following posets have the property $\cR$:
\begin{itemize}
\item The poset of natural numbers $\N$ under divisibility with the typing $t(n,m) = m/n$.
\item The poset $\sF(\N)$ of finite subsets of natural numbers under subset inclusion with the typing $t(S,T) = |T| - |S|+1$.
\item The poset $\sV(\Fq^\infty)$ of finite-dimensional subspaces of $\Fq^\infty$ under subspace inclusion with the typing $t(U,V) = \dim V - \dim U + 1$.
\end{itemize}
On the other hand, the poset of natural numbers $\N$ under the usual order and the typing $t(n,m) = m - n + 1$ does not have the property $\cR$. 
\end{theorem}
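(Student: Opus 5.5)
The plan is to handle each of the four posets separately. In each case I first compute the incidence coefficients $\rbinom{n}{d}{k}$ of $\sR$, so that \eqref{eq:redg=f} becomes an explicit transform between sequences. The first three cases then reduce either to Theorem~\ref{thm:pollack} or to the fact that a nonzero polynomial has finitely many zeros. The last case is settled by an explicit pair $(f,g)$.

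\emph{Divisibility.} With $t(n,m)=m/n$, an interval of type $n$ is isomorphic to the divisor lattice of $n$. An element $z$ of such an interval has $t(x,z)=d$ and $t(z,y)=k$ exactly when $d\mid n$ and $k=n/d$. Hence \eqref{eq:redg=f} reads $g(n)=\sum_{d\mid n} f(n/d)=\sum_{d\mid n} f(d)$. Since $\supp_\sR$ is just the support of an arithmetic function, property $\cR$ is precisely Theorem~\ref{thm:pollack}.

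\emph{Finite subsets.} An interval $[S,T]$ of type $n$ is a Boolean lattice of rank $n-1$. The sets $Z$ with $|Z|-|S|+1=d$ number $\binom{n-1}{d-1}$, and for each of them $k=n+1-d$. Writing $a(m)=f(m+1)$ and $b(m)=g(m+1)$, equation \eqref{eq:redg=f} becomes the binomial transform
\[ b(m)=\sum_{j=0}^{m}\binom{m}{j}a(j). \]
Suppose $a\neq 0$ has finite support, with largest element $J$. Then for $m\geqs J$,
\[ b(m)=\sum_{j\leqs J}\binom{m}{j}a(j) \]
is a polynomial in $m$ of degree exactly $J$. Its leading coefficient is $a(J)/J!\neq 0$; this is where characteristic $0$ (an embedding into $\C$) is used. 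A nonzero polynomial has finitely many roots, so $b$ has infinite support. If instead $a=0$, then $b=0$, which the nonzero hypothesis excludes.

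\emph{Subspaces of $\Fq^\infty$.} The same computation gives
\[ b(m)=\sum_{j}\binom{m}{j}_{q}a(j), \]
with Gaussian binomial coefficients, because an interval of type $n$ is the lattice of subspaces of an $(n-1)$-dimensional space. The key observation is that
\[ \binom{m}{j}_{q}=\prod_{i=0}^{j-1}\frac{q^{m}-q^{i}}{q^{j}-q^{i}} \]
is a polynomial in $X=q^m$ of degree $j$ with nonzero leading coefficient. So if $a$ is finitely supported and nonzero, then $b(m)=P(q^m)$ for a nonzero polynomial $P$ over a field of characteristic $0$. Since the values $q^m$ are pairwise distinct, $P(q^m)=0$ for only finitely many $m$, and so $\supp_\sR(b)$ is infinite. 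I expect the main care here to be in this step: justifying the polynomial-in-$q^m$ form and the nonvanishing of its leading coefficient.

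\emph{Usual order on $\N$.} Every interval is a chain. With $t(n,m)=m-n+1$, for each split $d+k=n+1$ there is exactly one $z$, so $\rbinom{n}{d}{k}=1$ and $g(n)=\sum_{k\leqs n}f(k)$. Take $f(1)=1$, $f(2)=-1$ and $f=0$ elsewhere. Then $g(1)=1$ and $g(n)=0$ for $n\geqs 2$. Both functions are nonzero with finite support, so property $\cR$ fails.
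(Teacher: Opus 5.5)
Your proof is correct and follows the paper's case-by-case argument: Pollack's theorem for divisibility, the eventual-polynomial form of the binomial and $q$-binomial transforms (the paper likewise writes $\binom{n}{k}_q$ as a polynomial in $q^n$ and uses that a nonzero polynomial has finitely many zeros), and the pair $f=\mu_P$, $g=\zeta_P\ast\mu_P$ for the usual order. The only cosmetic difference is that for $\sF(\N)$ the paper uses the asymptotic $g(n)\asymp n^{k_1-1}$, whereas you count roots of a nonzero polynomial; your version therefore works over any field of characteristic $0$ without needing an embedding into $\C$.
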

\subsection*{Acknowledgments} The author is partially supported by Trevor Wooley's start-up funding at Purdue University and by the AMS-Simons Travel Grant. I am grateful to Paul Pollack for a stimulating discussion. I am also thankful to Marcel Goh, Ofir Gorodetsky, Akshat Mudgal, and Brad Rodgers for their encouragement and for their comments on this manuscript.

\section{The sufficient condition}

In this section, we prove Theorem~\ref{thm:sufficient} and Corollary~\ref{cor:posetswithM}. Before doing so, we present the key idea in the context of Theorem~\ref{thm:pollack}. Thus, suppose that $f,g:\N \longrightarrow \C$ are arithmetic functions where $f$ is finitely supported but it is not identically zero. To prove Theorem~\ref{thm:pollack}, it suffices to show that $g$ is not finitely supported. Writing $S = \supp(f)$, we find that
\begin{equation} \label{eq:g=find} g(n) = \sum_{d \mid n} f(d) = \sum_{d \in S} f(d) \1_{d \mid n}. \end{equation}
Now, as $n$ runs through integers divisible by exactly one $d \in S$, we find that 
\[ g(n) = f(d) \neq 0. \]
However, this set is clearly infinite since $S$ is not the empty set.

\begin{remark}
The assertion that there are infinitely many integers divisible by exactly one $d \in S$ uses implicitly the infinitude of primes. However, this may be avoided if an aficionado of \cite{pollack} is so inclined: instead, observe that the right hand side of \eqref{eq:g=find} is a linear combination of periodic functions of $n$, and hence must itself be periodic. Since $g$ is periodic and is not identically zero, this implies that $\supp(g)$ is infinite. However, this modification does not appear to generalize to arbitrary posets with the property $\cH_\N$.
\end{remark}

We now implement this proof in the general setting.
\begin{proof}[Proof of Theorem~\ref{thm:sufficient}]
Let $P$ be the poset having the property $\cH_\N$. To prove that $P$ has $\cM$, we need to show that for every nonzero $f,g: P \longrightarrow \C$ satisfying
\begin{equation} \label{eqn:g=fsum} g(x) = \sum_{z \leqs x} f(z), \end{equation}
at most one of $\supp(f)$ and $\supp(g)$ is finite. To this end, suppose $\supp(f)$ is finite but not empty and denote it by $S$. By the hypothesis $\cH_\N$, it follows that there exists an element $z \in S$ such that the set
\[ \big\{ x \in P : x \geqs z \text{ but } x \centernot \geqs y \text{ for any } y \in S \setminus \{z\} \big\} \]
is infinite. However, for $x$ in this set, we find from \eqref{eqn:g=fsum} that
\[ g(x) = f(z) \neq 0. \]
This establishes the claim, as we have shown that $\supp(g)$ contains an infinite set.
\end{proof}

We now establish the corollary.

\begin{proof}[Proof of Corollary~\ref{cor:posetswithM}]
By Theorem~\ref{thm:sufficient}, it suffices to show that the posets in question have the property $\cH_\N$. 

For the poset of $\N$ under divisibility, this is clear and has already been used at the beginning of this section. 

For the poset $\sF(\N)$, let $S$ be a finite and nonempty set of elements in $\sF(\N)$. Let $U$ be a minimal element in $S$ with respect to subset ordering. We will construct an infinite sequence $\{ V_j \}$ of finite subsets of $\N$ which contain $U$ but contain no other element of $S$. Since $S$ is finite, the union of all its elements is also a finite subset of $\N$. Thus, in particular, there are infinitely many elements in $\N$ which do not appear in any element of $S$. Arranging these in sequence $\{ v_j\}_{j\in \N}$, we define $V_j = U \cup \{ v_j \}$. Thus, clearly $U \subseteq V_j$. On the other hand, for $W \in S$ such that $W \subseteq V_j$, since $v_j \notin W$ by construction, we get that $W \subseteq U$. This contradicts the minimality of $U$ unless $W = U$, as desired.

For the poset $\sV(\Fq^\infty)$, a similar argument works. In fact, this may be seen as the $q$-analogue of the previous argument. If $S$ is a collection of finite-dimensional vector subspaces of $\Fq^\infty$, we select $U$ to be a minimal element of $S$ with respect to subspace ordering and take $V_j = \Span\{ U, v_j\}$ where $v_j$ runs through an infinite sequence of linearly independent vectors that do not lie in $\Span S$.
 
\end{proof}

\section{The necessary condition}

In this section, we prove Theorem~\ref{thm:necessary}. 

\begin{proof}[Proof of Theorem~\ref{thm:necessary}]

To prove this theorem, it suffices to show that if $P$ is a locally finite poset which lacks either of the properties $\cH_1$ or $\cH_2$, then it must lack the M\"obius uncertainty property. 

Suppose first that $P$ lacks $\cH_1$. Thus, there is an element $z \in P$ with the property that there are only finitely many elements in the up-set
\[ U(z) = [z,\infty) = \{ y \in P : y \geqs z \}. \]
Without losing generality, we may assume that $U(z) = \{z\}$ by replacing $z$ with a maximal element in $U(z)$. Now, consider the function
\[ f(x) = \1_z(x) = \begin{cases} 1 & \text{ if } x = z, \\ 0 & \text { otherwise.} \end{cases} \]
For this choice of $f$,
\[ g(x) = \sum_{y \leqs x} f(y) = \sum_{y \leqs x} \1_z(y) = \1_{z\leqs x} = \1_z(x) . \]
In the last step, we used $U(z) = \{z\}$. In particular, $\supp(f) = \supp(g) = \{z\}$, whence $P$ does not have $\cM$.

Now suppose that $P$ lacks $\cH_2$. Thus, there is a two-element set $\{z_1,z_2\}$ in $P$ such that the symmetric difference of their up-sets
\[ U(z_1) \symdiff U(z_2) := \Big(U(z_1) \setminus U(z_2)\Big) \cup \Big(U(z_2) \setminus U(z_1)\Big) \]
is finite. Now, define 
\begin{equation} \label{eq:falt} f(x) = \1_{z_2}(x) - \1_{z_1}(x)=\begin{cases} (-1)^j & \text{ if } x = z_j \text{ for } j \in \{1,2\}, \\ 0 & \text{ otherwise.} \end{cases} \end{equation}
Since $\supp(f) = \{z_1,z_2\}$ is finite, it now suffices to show that
\[ \supp(g) \subseteq U(z_1) \symdiff U(z_2)  \]
as the latter is finite. To this end, suppose that $x$ does not belong to the latter set. First, let us deal with the case $x \notin U(z_j)$ for $j = 1,2$. In this case,
\[ g(x) = \sum_{y\leqs x} f(y) = 0, \]
since we have assumed that $x$ does not dominate any element in the support of $f$. Second, consider the case $x \in U(z_j)$ for $j = 1,2$. In this case,

\[ g(x) = \sum_{y\leqs x} f(y) = f(z_1) + f(z_2) = -1 + 1 = 0, \]
since $x$ dominates both elements in the support of $f$. Thus, we see that the support of $g$ must be contained in the symmetric difference of $U(z_1)$ and $U(z_2)$ as claimed. 

\end{proof}

\section{Proof of Theorem~\ref{thm:gohtrue}} \label{sec: gohl}

We begin this section by pointing out that a locally finite join semi-lattice $L$ with $\hat{0}$ must necessarily be a lattice. It suffices to show binary meets $y \wedge z$ exist in $L$. Note that the down-set
\[ D(z) = [\hat{0},z] = \{ x \in L : x \leqs z \} \]
is finite and contains $\hat{0}$. Thus, $D(y) \cap D(z)$ is finite and non-empty, and hence has a join. It is a standard exercise to check that this join is in fact the meet $y \wedge z$. 

It follows that if $x,y,z \in L$, with $x \leqs y, z$ then
\begin{equation} \label{eq:princdown} [x,y] \cap [x,z] = [x,y \wedge z]. \end{equation}
This will be used crucially in the proof of Theorem~\ref{thm:gohtrue}, which we now present.

\begin{proof}[Proof of Theorem~\ref{thm:gohtrue}]
The implications 
\[ L \text{ has } \cH_\N \implies L \text{ has } \cM \implies L \text{ has } \cG \]
are already known, the former following from Theorem~\ref{thm:sufficient} and the latter from \cite[Proposition 4.1]{goh}. Thus, it suffices to prove that
\[ L \text{ has } \cG \implies L \text{ has } \cH_\N. \]
We proceed by contrapositive. If $L$ lacks $\cH_\N$, then there exists a nonempty and finite subset $S \subset L$ such that for every $x \in S$, the difference of up-sets
\[ U(x) \setminus \bigcup_{\substack{v \in S\\v \neq x}} U(v) \]
is finite. Choosing $x$ to be the minimal element in $S$, we can ensure this set contains $x$. Then, replacing $x$ with a maximal element in the above set, we can assume without losing generality that
\[ \{x\} =  U(x) \setminus \bigcup_{\substack{v \in S\\v \neq x}} U(v) \]
Further, since $S$ is finite, $S$ has a supremum, which we shall denote by $z$. Setting $E = [x,z]$, we shall prove that for this choice of $x$ and $E$, we have that
\begin{equation} \label{eq:muvan} \mu_L(x,y) = 0 \text{ for any } y \in U(x) \setminus E. \end{equation}
Since $\mu_L(x,y) = 0$ whenever $y \notin U(x)$, it follows that
\[ \{ y \in L : \mu_L(x,y) \neq 0 \} \subseteq E \]
and hence it is finite since $E = [x,z]$ is finite. Thus, $L$ lacks the M\"obius nonvanishing property $\cG$.

It remains to prove \eqref{eq:muvan}. We proceed by induction on the size of the set $[x,y] \setminus E$, the base case being when $[x,y] \setminus E = \{y\}$. By \eqref{eq:mudef}, we have that
\[ \mu_L(x,y) = - \sum_{x \leqs u < y} \mu_L(x,u). \]
In the base case, any $u$ that appears in this sum is an element of $E$. In fact,
\[ [x,y) = [x,y] \cap E = [x,y] \cap [x,z] = [x,y_0], \]
where $y_0 = y \wedge z$. Thus,
\[ \mu_L(x,y) = - \sum_{x \leqs u \leqs y_0} \mu_L(x,u) = 0 \]
where the last equality follows from the recursion \eqref{eq:mudef} defining $\mu_L(x,y_0)$. This proves the base case.

For the inductive step, we have that
\begin{equation*}
\begin{split}
\mu_L(x,y) & = - \sum_{x \leqs u < y} \mu_L(x,u) \\ & = - \sum_{\substack{x \leqs u < y\\u \notin E}} \mu_L(x,u) - \sum_{\substack{x \leqs u < y\\ u \in E}} \mu_L(x,u). 
\end{split}
\end{equation*}
By the inductive hypothesis, $\mu_L(x,u) = 0$ for any $u \notin E$, since $[x,u] \setminus E$ clearly has fewer elements than $[x,y] \setminus E$. Hence, the first sum here vanishes. On the other hand, the same argument as in the base case will deliver that the second sum also vanishes. Thus, $\mu_L(x,y) = 0$ closing the induction and completing the proof.
\end{proof}
\section{Counterexamples} \label{sec:counterexample}

In this section, we provide the examples that prove Theorem~\ref{thm:gohcounterexample} and Theorem~\ref{thm:P3counterexample}. We will need a modicum more from the theory of posets. If $P$ and $Q$ are locally finite posets, we endow $P \times Q$ with the ordering 
\[ (p_1,q_1) \leqs (p_2,q_2) \text{ in } P\times Q \iff p_1 \leqs p_2 \text { in } P \text{ and } q_1 \leqs q_2 \text{ in } Q. \]
This turns $P \times Q$ into a locally finite poset as well. 

\begin{proof}[Proof of Theorem~\ref{thm:gohcounterexample}]
We will construct the desired poset in steps. To begin with, denote by $A$ the antichain poset on $\N$ (where each pair of nonequal elements is incomparable) and by $D$ the standard divisibility poset on $\N$. Set $P_1 = A \times D$. 

To $P_1$, we will now attach three elements $\{u, z_1, z_2 \}$ to obtain $P_2$. The new poset $P_2$ has all the inequalities as $P_1$ and also has the inequalities
\[ u < z_1, z_2 < p \]
for every element $p \in P_1$. There are no other nontrivial inequalities in $P_2$, and hence in particular $z_1$ and $z_2$ are incomparable. 

Finally, to $P_2$, we attach a copy of $D \setminus \{1\}$ (which we will denote $D_0$) to obtain $P$. In $P$, we include all the inequalities implicit in $P_2$ and $D_0$ and we further add the inequalities
\[ u < d \]
for every $d \in D_0$. No other inequalities are added. In particular, we see that $D_0 \cup \{u\}$ is naturally isomorphic to $D$, under the map which is the inclusion on $D_0$ and maps $u$ to $1$. 

Figure~\ref{fig:gohcounterexample} provides a schematic description of the poset $P$.
\begin{figure}[ht]
\centering
\caption{Schematic of the Hasse diagram for $P$.}
\label{fig:gohcounterexample}
\begin{tikzpicture}[
  point/.style={circle, draw, inner sep=1.5pt},
  big/.style={ellipse, draw, minimum width=3cm, minimum height=1cm, align=center},
  d0/.style={ellipse, draw, minimum width=4cm, minimum height=2cm, align=center}
]

\node[big] (p1) at (0,5) {$P_1 = A \times D$};

\node[point] (z1) at (-1.2,3.5) {$z_1$};
\node[point] (z2) at (1.2,3.5) {$z_2$};

\node[point] (u) at (0,1.5) {$u$};

\draw (u) -- (z1);
\draw (u) -- (z2);
\draw (z1) -- (p1);
\draw (z2) -- (p1);

\node[draw, thick, rectangle, inner sep=6pt, fit=(p1) (z1) (z2) (u), label=above right:$P_2$] {};

\node[point] (d2) at (4,3.5) {$2$};
\node[point] (d3) at (5,3.5) {$3$};
\node[point] (d5) at (6,3.5) {$5$};
\node[point] (d4) at (4,4.5) {$4$};
\node[point] (d6) at (5,4.5) {$6$};

\draw (d2) -- (d4); 
\draw (d2) -- (d6); 
\draw (d3) -- (d6); 

\draw (u) -- (d2);
\draw (u) -- (d3);
\draw (u) -- (d5);

\node[d0, thick, fit=(d2) (d3) (d4) (d5) (d6), label=above right:$D_0$] {};

\end{tikzpicture}
\end{figure}
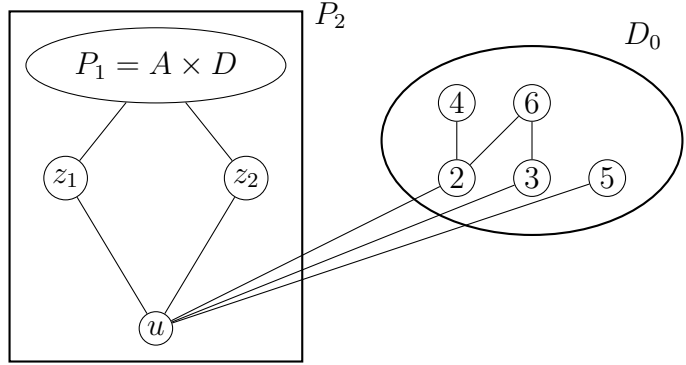

We now show that $P$ has the property $\cG$. To this end, let $x \in P$. If $x$ is contained in a copy of $D$ (say, if it is $u$, an element of $D_0$, or an element of $P_1$), then we obtain immediately that 
\begin{equation} \label{eq:setofmu!=0} \{ y \in P : \mu_P(x,y) \neq 0 \} \end{equation}
is infinite. This is because we may choose $y$ to lie in the same copy of $D$ as $x$; since $\mu_P(x,y)$ depends only on the interval $[x,y]$, it follows that $\mu_P(x,y) = \mu_D(x,y)$ where in the latter expression we are considering $[x,y]$ as the isomorphic interval in $D$. However, $\mu_D(x,y) \neq 0$ for infinitely many $y$, settling this case.

It remains to consider the case $x = z_j$ for some $j$. In this case, define $m_\ell \in P_1 = A \times D$ to be the tuple $(\ell,1)$. By construction, we have that $x \leqs m_\ell$ for every $\ell$. On the other hand, it is easy to see that the interval $(x,m_\ell)$ is empty; if there were a tuple $(s,t) \in P_1$ satisfying $(s,t) \leqs m_\ell = (\ell,1)$ then by the order in $A \times D$, it must be the case that $s = \ell$ and $t$ divides $1$ and hence $t = 1$. However, this implies that
\[ \mu_P(x,m_\ell) = -\sum_{x \leqs y < m_\ell} \mu_P(x,y) = - \mu_P(x,x) =  -1  \]
for every $\ell \in \N$, thus proving that the set \eqref{eq:setofmu!=0} has infinitely many elements. 

Despite this, it is immediately apparent that $P$ does not have $\cH_2$, since manifestly,
\[ U(z_1) \symdiff U(z_2) = \{ z_1, z_2 \}, \]
and hence $\{z_1,z_2\}$ is a witness to $\cH_2$ failing. In particular, we see that the function $f$ defined by \eqref{eq:falt} furnishes a counterexample to the M\"obius uncertainty principle $\cM$.
\end{proof}

A few remarks are in order. The idea of attaching $z_1$ and $z_2$ below $A \times D$ is the crucial one. The inclusion of $u$ is only to ensure that $P$ has a minimal element $\hat{0}$, and the inclusion of $D_0$ is to ensure that the set \eqref{eq:setofmu!=0} remains infinite for $x = u$. The reader may wish to check that this counterexample is not a lattice, since $z_1 \wedge z_2$ does not exist.

We now turn to proving Theorem~\ref{thm:P3counterexample}. 

\begin{proof}[Proof of Theorem~\ref{thm:P3counterexample}]
We again describe the construction in steps. To begin with, keeping $A$ and $D$ as in the proof of the previous theorem, define $Q_1 = A \times D$. Now, take three copies of $Q_1$, which we label $Q_{a}$, $Q_{b}$, and $Q_{c}$ together with three elements $\{a,b,c\}$. We keep the inequalities implicit in $Q_x$ for $x \in \{a,b,c\}$ and for $x, y \in \{a,b,c\}$ with $x \neq y$, we add the inequality
\[ x < q \]
whenever $q \in Q_y$. In other words, $Q_y$ dominates every element from $\{a,b,c\}$ except $y$ itself. We call the resulting poset $Q_2$. Finally, we obtain the desired poset $Q$ by adding a minimal element $u$ to $Q_2$ together with an attached copy of $D_0$ as in the previous theorem. 

Figure~\ref{fig:p3counterexample} hopefully provides an illuminating diagram for $Q$.

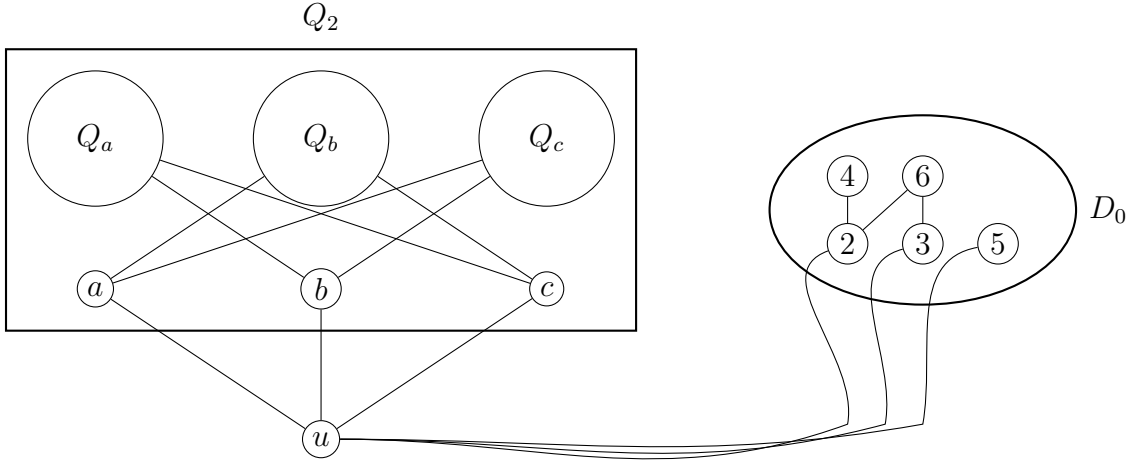
\begin{figure}[ht]
\centering
\caption{Schematic of the Hasse diagram for $Q$.}
\label{fig:p3counterexample}

\begin{tikzpicture}[
scale=1,
every node/.style={draw, circle, inner sep=2pt},
Qcircle/.style={draw, circle, minimum size=1.8cm},
box/.style={draw, rectangle, inner sep=8pt},
Doval/.style={draw, ellipse, inner sep=4pt}
]

\node (u) at (0,-0.4) {$u$};

\node (a) at (-3,1.6) {$a$};
\node (b) at (0,1.6) {$b$};
\node (c) at (3,1.6) {$c$};

\draw (u) -- (a);
\draw (u) -- (b);
\draw (u) -- (c);

\node[Qcircle] (Qa) at (-3,3.6) {$Q_a$};
\node[Qcircle] (Qb) at (0,3.6) {$Q_b$};
\node[Qcircle] (Qc) at (3,3.6) {$Q_c$};

\draw (a) -- (Qb);
\draw (a) -- (Qc);
\draw (b) -- (Qa);
\draw (b) -- (Qc);
\draw (c) -- (Qa);
\draw (c) -- (Qb);

\node (d2) at (7,2.2) {$2$};
\node (d3) at (8,2.2) {$3$};
\node (d5) at (9,2.2) {$5$};
\node (d4) at (7,3.1) {$4$};
\node (d6) at (8,3.1) {$6$};

\draw (d2) -- (d4);
\draw (d2) -- (d6);
\draw (d3) -- (d6);

\draw (u) to[out=0,in=-160] (7,-0.2) to[out=80,in=-160] (d2);
\draw (u) to[out=0,in=-165] (7.5,-0.2) to[out=80,in=-165] (d3);
\draw (u) to[out=0,in=-170] (8,-0.2) to[out=80,in=-170] (d5);

\node[Doval, thick, fit={(6.7,1.9) (9.3,3.4)}, label=right:$D_0$] {};

\node[box, thick, fit={(Qa) (Qb) (Qc) (a) (b) (c)}, label=above:$Q_2$] {};

\end{tikzpicture}

\end{figure}

Observe that $Q$ as constructed has the properties $\cH_1$ and $\cH_2$ but it does not have the property $\cH_3$, since for $S = \{a,b,c\}$, we see that there is no element in $Q$ that dominates exactly one element in $S$. However, as we will now show, $Q$ still has the M\"obius uncertainty property $\cM$. 

To see this, let $f,g \in \sJ(Q)$ be such that $f$ is finitely supported but nonzero and 
\[ g(x) = \sum_{z \leqs x} f(z). \]
Writing $D_1 = D_0 \cup \{u\}$, it is easy to see that the restriction $g\big|_{D_1}$ depends only on $f\big|_{D_1}$. In particular, if the support of $f$ intersects $D_1$, then $g$ is infinitely supported since $D_1$ is isomorphic to $D$. Thus, from now on, we can assume that $\supp(f)$ does not intersect $D_1$ and hence also that $f(u) = 0$. 

Now, suppose that $f(a) + f(b) \neq 0$. Consider the element $m_\ell \in Q_c$, which is defined by $m_\ell = (\ell,1)$ when $Q_c$ is regarded as $A \times D$. Since $f$ is finitely supported, there are infinitely many $\ell \in \N$ such that $f(m_\ell) = 0$. Since $m_\ell$ is minimal in $Q_c$, we find that
\[ g(m_\ell) = \sum_{z \leqs m_\ell} f(z) = f(a) + f(b) \neq 0 \]
whenever $f(m_\ell) = f(u) = 0$. Since the latter happens for infinitely many $\ell$, we see that $g$ has infinite support in this case.

Thus, we may now suppose that $f(a) + f(b) = 0$. Due to the $S_3$-symmetry on the set $\{a,b,c\}$ inherent in the construction of $Q$, we may also suppose that $f(a) + f(c) = 0$ and $f(b) + f(c) = 0$. From all of this, it follows that
\[ f(a) = f(b) = f(c) = 0. \]
Now, since $f$ has some support, we may assume (by using the $S_3$-symmetry if necessary) that $f(z) \neq 0$ for some $z \in Q_a$. However, $Q_a$ is composed of disjoint copies of $D$, and the restriction of $g$ to any such $D$ can depend at most on $f$ restricted to this $D$ (this is because $f(b) = f(c) = f(u) = 0$). We may thus conclude that $g$ is infinitely supported with the same reasoning as with $D_1$ above.

This covers all the cases, concluding the proof. 
\end{proof}

\section{The uncertainty principle for reduced incidence algebras}

We begin this section by remarking that typings specified in Theorem~\ref{thm:reduced} are the natural typings on these posets. The reduced incidence algebras in these cases are naturally isomorphic to algebras of formal generating functions (see \cite{rota6reduced}):
\begin{itemize}
\item For $\N$ under division, $f \mapsto \sum f(n)/n^s$ identifies $\sR$ with the algebra of Dirichlet series. The structure coefficients are
\[ \rbinom{n}{d}{k} = \1_{n = dk}. \]
\item For $\N$ under linear ordering, $f \mapsto \sum f(n)x^{n-1}$ identifies $\sR$ with the algebra of power series. The structure coefficients are
\[ \rbinom{n}{d}{k} = \1_{n = d + k - 1}. \]
\item For $\sF(\N)$, $f \mapsto \sum f(n) x^{n-1}/(n-1)!$ identifies $\sR$ with the algebra of exponential power series. The structure coefficients are
\[ \rbinom{n}{d}{k} = \1_{n = d + k - 1} \binom{n-1}{k-1}. \]
\item For $\sV(\Fq^\infty)$, $f \mapsto \sum f(n) x^{n-1}/[n-1]!_q$ identifies $\sR$ with the algebra of Eulerian power series (this being the $q$-analogue of the previous example, albeit with $q$ restricted to prime powers). The structure coefficients are
\[ \rbinom{n}{d}{k} = \1_{n = d + k-1} \binom{n-1}{k-1}_q. \]
\end{itemize}

Let us begin the proof of Theorem~\ref{thm:reduced} in earnest. The case of $\N$ under division is contained already in Theorem~\ref{thm:pollack}. The case of $\N$ under the usual ordering is easily settled, as in this case, 
\[ \mu_P(n) = \begin{cases} 1 & \text{ if } n = 1, \\ -1 & \text{ if } n = 2, \\ 0 & \text{ otherwise.} \end{cases} \]
Clearly $\zeta_P \ast \mu_P$ and $\mu_P$ are both finitely supported in this case. 

We give the proofs of the other two claims the status of self-contained propositions.

\begin{proposition} \label{prop:finsubsetN}
Let $f,g:\N \longrightarrow \C$ be functions which are not identically zero such that 
\[ g(n) = \sum_{k=1}^{n} \binom{n-1}{k-1} f(k). \]
Then, either $\supp(f)$ or $\supp(g)$ is infinite. 
\end{proposition}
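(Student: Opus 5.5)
Use the exponential generating function identification from the list above. Set
\[ F(x) = \sum_{k\geqs 1} f(k)\frac{x^{k-1}}{(k-1)!}, \qquad G(x) = \sum_{n\geqs 1} g(n)\frac{x^{n-1}}{(n-1)!}. \]
The relation in the statement is exactly the binomial convolution of $f$ with $\zeta_P \equiv 1$, whose exponential generating function is $e^x$. Hence, as formal power series, we have $G(x) = e^x F(x)$. One can also verify this directly by expanding the product and comparing coefficients of $x^{n-1}/(n-1)!$.

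Suppose, for contradiction, that both $\supp(f)$ and $\supp(g)$ are finite. Then $F$ and $G$ are polynomials, and $F \neq 0$ because $f$ is not identically zero. Both sides of $G = e^x F$ are then entire functions whose Taylor coefficients agree, so the identity holds as functions on $\C$. This is the point where one uses that the coefficients lie in $\C$ (or in an embedded subring), as the paper's footnote indicates.

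It remains to derive a contradiction from $G(x) = e^x F(x)$ with $F,G$ polynomials and $F\neq 0$. I would compare growth along the positive real axis. As $x\to+\infty$, we have $|F(x)| \geqs c\,x^{\deg F}$ for some $c>0$, so $|e^x F(x)|/|G(x)| \to \infty$, which is impossible. If $G=0$, then $e^xF=0$ forces $F=0$, which contradicts the hypothesis. An equivalent and cleaner alternative is to note that $e^x = G/F$ would be a rational function, but $e^x$ has no poles and is not a polynomial, since its derivative equals itself and is nonzero. Either way, $e^x$ cannot be rational.

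The main obstacle is making sure the formal identity $G = e^x F$ is correct. This means checking that the binomial coefficient $\binom{n-1}{k-1}$ matches the indexing shift $x^{k-1}/(k-1)!$, so that $g(n)$ is precisely $(n-1)!$ times the coefficient of $x^{n-1}$ in $e^xF$. After that, the transcendence argument is routine.
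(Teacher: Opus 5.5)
Your proof is correct, and it takes a genuinely different route from the paper. The paper never passes to generating functions. It takes the largest element $k_1$ of $\supp(f)$ and notes that for $n>k_1$ the value $g(n)=\sum_{k\in\supp(f)}\binom{n-1}{k-1}f(k)$ is a combination of polynomials in $n$ of distinct degrees $k-1$. The top term $\binom{n-1}{k_1-1}f(k_1)\asymp n^{k_1-1}$ dominates the $O(n^{k_1-2})$ remainder, so $g(n)\neq 0$ for all large $n$. The paper's growth comparison is therefore in the index $n$, whereas yours is in the generating variable $x$ (exponential versus polynomial) after rewriting the relation as $G=e^xF$. Your check of that identity, and the passage from formal power series to entire functions, are both sound.

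The paper's argument is shorter and avoids the analytic step. It also proves more: a nonzero finitely supported $f$ forces $\supp(g)$ to be cofinite, not merely infinite. Your argument instead gives a structural reformulation. In each of the power-series models listed in the last section (ordinary, exponential, Eulerian), property $\cR$ is equivalent to the generating function of $\zeta$ not being a rational function. This explains at once why the linear order fails, since there $\zeta$ corresponds to the rational function $1/(1-x)$. It also adapts directly to Proposition~\ref{prop:finvectFq}: there the generating function of $\zeta$ is an entire $q$-exponential, which by Euler's product formula has infinitely many zeros and so cannot equal $G/F$. This gives a more uniform treatment of the cases the paper remarks look different in character.
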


\begin{proof}
Suppose $f$ is finitely supported, with $S = \supp(f)$ and let $k_1$ be the largest integer in $S$. This implies that if $n> k_1$,
\[ g(n) = \sum_{k \in S} \binom{n-1}{k-1} f(k). \]
However, since $\binom{n}{k} \asymp_{k} n^k$, we find that
\begin{align*} g(n) & = \binom{n-1}{k_1-1} f(k_1) + \sum_{k < k_1} \binom{n-1}{k-1} f(k) \\ & = \binom{n-1}{k_1-1} f(k_1) + O(n^{k_1 - 2}) \\&  \asymp n^{k_1-1}. \end{align*}
Implicit constants in the asymptotic notation here may depend on $f$. In any case, this implies that $g(n) \neq 0$ for sufficiently large $n$, proving the proposition.
\end{proof}

\begin{proposition} \label{prop:finvectFq}
Let $f,g:\N \longrightarrow \C$ be functions which are not identically zero such that 
\[ g(n) = \sum_{k=1}^n \binom{n-1}{k-1}_q f(k). \]
Then, either $\supp(f)$ or $\supp(g)$ is infinite. Here, 
\[ \binom{n}{k}_q = \frac{[n]!_q}{[k]!_q [n-k]!_q} \]
is the $q$-binomial and
\[ [n]!_q = \prod_{d=1}^n [d] = \prod_{d=1}^n \frac{q^d-1}{q-1} \]
is the $q$-factorial.
\end{proposition}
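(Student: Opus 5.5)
Suppose $f$ is finitely supported and nonzero; I will show that $g(n)\neq 0$ for all sufficiently large $n$, so $\supp(g)$ is infinite. Let $S=\supp(f)$ and $k_1=\max S$. For $n>k_1$,
\[ g(n) = \sum_{k\in S} \binom{n-1}{k-1}_q f(k), \]
a fixed finite linear combination of $q$-binomials. The plan mirrors the proof of Proposition~\ref{prop:finsubsetN}: isolate the term $k=k_1$ and show it dominates the rest as $n\to\infty$.

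The key step is to compare growth rates. Write $N=n-1$ and $j=k-1$. I will check that $\binom{N}{j}_q$ behaves like $c_j q^{j(N-j)}$ for $N$ large, where
\[ c_j=\prod_{i=1}^{j}(1-q^{-i})^{-1}\in(1,\infty) \]
depends only on $j$ and $q$. Indeed,
\[ \binom{N}{j}_q=\prod_{i=1}^{j}\frac{q^{N-j+i}-1}{q^i-1}, \]
and each factor equals $q^{N-j}\cdot (1-q^{-(N-j+i)})/(1-q^{-i})$. The numerator factors tend to $1$, giving $\binom{N}{j}_q\asymp_j q^{j(N-j)}$. Hence for $j<j_1:=k_1-1$,
\[ \frac{\binom{N}{j}_q}{\binom{N}{j_1}_q}\asymp_{j,j_1} q^{j(N-j)-j_1(N-j_1)} = q^{-(j_1-j)N + (j_1^2-j^2)}, \]
and this tends to $0$ exponentially in $N$ because $q\ge 2$ and $j_1>j$.

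Consequently
\[ g(n)=\binom{n-1}{k_1-1}_q\Bigl(f(k_1)+o(1)\Bigr) \]
as $n\to\infty$. Since $f(k_1)\neq0$ and the $q$-binomial is a positive integer, $g(n)\neq 0$ for all large $n$. This gives the result over $\C$ via absolute values. For a general characteristic-$0$ field one can instead use that $\binom{n-1}{k-1}_q$ is a polynomial in $q^{n}$ of degree $k-1$ over $\mathbb{Q}$, so $g(n)=P(q^{n})$ for a nonzero polynomial $P$, which has only finitely many roots.

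I expect the main (mild) obstacle to be the uniform asymptotic for the $q$-binomial with $j$ fixed and $N\to\infty$. It is handled by the explicit product above, so there is no real difficulty. The exponential separation between consecutive $j$ makes the domination even cleaner than in the polynomial case of Proposition~\ref{prop:finsubsetN}.
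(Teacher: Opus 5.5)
Your proof is correct, but your main argument is not the one the paper uses.

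The paper's proof is purely algebraic. It introduces $P_k(u)=\prod_{d=1}^{k}(1-q^{1-d}u)/(1-q^{d})$, a polynomial of exact degree $k$ with $P_k(q^n)=\binom{n}{k}_q$. For $n$ beyond $\max S$ this gives $g(n)=\sum_{k\in S}f(k)P_{k-1}(q^{n-1})$. If $g$ vanished for all large $n$, this polynomial in $u$ would have infinitely many roots $u=q^{n-1}$, so it would be identically zero. Linear independence of the $P_{k-1}$ (they have distinct degrees) then forces $f\equiv 0$. This is precisely the alternative you sketch in your final paragraph.

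Your primary route instead transplants the dominant-term argument of Proposition~\ref{prop:finsubsetN}. You use the explicit product to get $\binom{N}{j}_q\sim c_j q^{j(N-j)}$, together with the exponential gap $q^{-(j_1-j)N}$ between different values of $j$.

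What your route buys:
\begin{itemize}
\item It treats the $\sF(\N)$ case and the $q$-analogue uniformly, which somewhat tempers the paper's closing remark that the three proofs have very different character.
\item It gives quantitative growth information, namely $|g(n)|\asymp_f q^{(k_1-1)n}$ for large $n$.
\end{itemize}

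What the paper's route buys:
\begin{itemize}
\item It needs no absolute value, so it works verbatim for $f$ valued in any field of characteristic $0$, as the paper's footnote requires.
\item It works for any $q$ that is nonzero and not a root of unity.
\item The same reasoning shows $g(n)=0$ for at most $k_1-1$ values $n\ge k_1$.
\end{itemize}
Your asymptotic argument, by contrast, needs $f$ to be $\C$-valued and $|q|>1$.
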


\begin{proof}
For $k \geqs 0$, we define the polynomials
\[ P_k(u) = \prod_{d = 1}^k \Big(\frac{1-q^{1-d}u}{1-q^d}\Big). \]
For different $k$, these are of differing degrees, so they must linearly independent over $\C$. It is easy to check that
\[ P_k(q^n) = \prod_{d=1}^k \Big(\frac{1-q^{n-d+1}}{1-q^d}\Big) = \binom{n}{k}_q. \]
Now, suppose that $f,g: \N \longrightarrow \C$ which are both finitely supported. It suffices to show that $f$ vanishes identically. Taking $n$ larger than every element in $S = \supp(f)$ again, we find that
\begin{equation} \label{eq:g=binomq} g(n) = \sum_{k \in S} \binom{n-1}{k-1}_q f(k) = \sum_{k \in S} f(k) P_{k-1}(q^{n-1}). \end{equation}
Since $g(n) = 0$ for sufficiently large $n$, this implies that the polynomial
\[ \sum_{k \in S} f(k)P_{k-1}(u) \]
has infinitely many zeros as a polynomial in $u$. Since the only such polynomial is identically zero and since the polynomials $P_{k-1}$ are linearly independent, it follows that $S$ must be empty, as desired.
\end{proof}

Deducing the theorem from these propositions is immediate; it follows from using the information we have about the incidence coefficients.

We note here that the proofs of the three positive results enshrined in Theorem~\ref{thm:reduced} seem to have very different character and it seems unclear if there is a more poset-theoretic proof of it. I find it conceivable that in this setting, the analogue of Goh's conjecture (Conjectures~\ref{conj: goh}~and~\ref{conj: gohl}) may actually be correct even without the lattice assumption. That is, I think it may be the case that $P$ has property $\cR$ precisely when $\mu_P \in \sR$ is not finitely supported as a function on the natural numbers. Implicit in this assertion is a belief that the counterexample in Theorem~\ref{thm:gohcounterexample} has no typing; I did not, however, try to prove this.
\bibliography{poset}

@article {pollack,
    AUTHOR = {Pollack, Paul},
     TITLE = {The {M}\"obius transform and the infinitude of primes},
   JOURNAL = {Elem. Math.},
  FJOURNAL = {Elemente der Mathematik},
    VOLUME = {66},
      YEAR = {2011},
    NUMBER = {3},
     PAGES = {118--120},
      ISSN = {0013-6018,1420-8962},
   MRCLASS = {11A41},
  MRNUMBER = {2824427},
       DOI = {10.4171/EM/178},
       URL = {https://doi.org/10.4171/EM/178},
}

@article {pollacksanna,
    AUTHOR = {Pollack, Paul and Sanna, Carlo},
     TITLE = {Uncertainty principles connected with the {M}\"obius inversion
              formula},
   JOURNAL = {Bull. Aust. Math. Soc.},
  FJOURNAL = {Bulletin of the Australian Mathematical Society},
    VOLUME = {88},
      YEAR = {2013},
    NUMBER = {3},
     PAGES = {460--472},
      ISSN = {0004-9727,1755-1633},
   MRCLASS = {11A25 (11N37)},
  MRNUMBER = {3189296},
MRREVIEWER = {Giuseppe\ Melfi},
       DOI = {10.1017/S0004972712001128},
       URL = {https://doi.org/10.1017/S0004972712001128},
}

@article {sanna,
    AUTHOR = {Sanna, Carlo},
     TITLE = {On the asymptotic density of the support of a {D}irichlet
              convolution},
   JOURNAL = {J. Number Theory},
  FJOURNAL = {Journal of Number Theory},
    VOLUME = {134},
      YEAR = {2014},
     PAGES = {1--12},
      ISSN = {0022-314X,1096-1658},
   MRCLASS = {11A25 (11N37)},
  MRNUMBER = {3111554},
MRREVIEWER = {Guoyou\ Qian},
       DOI = {10.1016/j.jnt.2013.07.012},
       URL = {https://doi.org/10.1016/j.jnt.2013.07.012},
}

@article {goh,
    AUTHOR = {Goh, Marcel K.},
     TITLE = {An uncertainty principle for {M}\"obius inversion on posets},
   JOURNAL = {Contrib. Discrete Math.},
  FJOURNAL = {Contributions to Discrete Mathematics},
    VOLUME = {21},
      YEAR = {2026},
    NUMBER = {1},
     PAGES = {142--148},
      ISSN = {1715-0868},
   MRCLASS = {06A07},
  MRNUMBER = {5039869},
       DOI = {10.55016/6ej3y980},
       URL = {https://doi.org/10.55016/6ej3y980},
}

@article {rota,
    AUTHOR = {Rota, Gian-Carlo},
     TITLE = {On the foundations of combinatorial theory. {I}. {T}heory of
              {M}\"obius functions},
   JOURNAL = {Z. Wahrscheinlichkeitstheorie und Verw. Gebiete},
  FJOURNAL = {Zeitschrift f\"ur Wahrscheinlichkeitstheorie und Verwandte
              Gebiete},
    VOLUME = {2},
      YEAR = {1964},
     PAGES = {340--368},
   MRCLASS = {05.05},
  MRNUMBER = {174487},
MRREVIEWER = {M.\ A.\ Harrison},
       DOI = {10.1007/BF00531932},
       URL = {https://doi.org/10.1007/BF00531932},
}

@inproceedings {rota6reduced,
    AUTHOR = {Doubilet, Peter and Rota, Gian-Carlo and Stanley, Richard},
     TITLE = {On the foundations of combinatorial theory. {VI}. {T}he idea
              of generating function},
 BOOKTITLE = {Proceedings of the {S}ixth {B}erkeley {S}ymposium on
              {M}athematical {S}tatistics and {P}robability ({U}niv.
              {C}alifornia, {B}erkeley, {C}alif., 1970/1971), {V}ol. {II}:
              {P}robability theory},
     PAGES = {267--318},
 PUBLISHER = {Univ. California Press, Berkeley, CA},
      YEAR = {1972},
   MRCLASS = {05A15},
  MRNUMBER = {403987},
MRREVIEWER = {A.\ G.\ Law},
}

@article {rota4fq,
    AUTHOR = {Goldman, Jay and Rota, Gian-Carlo},
     TITLE = {On the foundations of combinatorial theory. {IV}. {F}inite
              vector spaces and {E}ulerian generating functions},
   JOURNAL = {Studies in Appl. Math.},
  FJOURNAL = {Studies in Applied Mathematics},
    VOLUME = {49},
      YEAR = {1970},
     PAGES = {239--258},
      ISSN = {0022-2526,1467-9590},
   MRCLASS = {05.10},
  MRNUMBER = {265181},
MRREVIEWER = {L.\ D.\ Baumert},
       DOI = {10.1002/sapm1970493239},
       URL = {https://doi.org/10.1002/sapm1970493239},
}

@book {stanley,
    AUTHOR = {Stanley, Richard P.},
     TITLE = {Enumerative combinatorics. {V}ol. 1},
    SERIES = {Cambridge Studies in Advanced Mathematics},
    VOLUME = {49},
      NOTE = {With a foreword by Gian-Carlo Rota,
              Corrected reprint of the 1986 original},
 PUBLISHER = {Cambridge University Press, Cambridge},
      YEAR = {1997},
     PAGES = {xii+325},
      ISBN = {0-521-55309-1; 0-521-66351-2},
   MRCLASS = {05-02 (05A15 06-02 11-02)},
  MRNUMBER = {1442260},
MRREVIEWER = {Wayne\ M.\ Dymacek},
       DOI = {10.1017/CBO9780511805967},
       URL = {https://doi.org/10.1017/CBO9780511805967},
}
\bibliographystyle{plain}

\end{document}